\newcommand{\C}{{\mbox{${\mathcal C}$}}}
\DeclareMathOperator{\kernel}{ker}
\DeclareMathOperator{\image}{im}
\newcommand{\set}[2]{\{#1\;|\;#2\}}
\newcommand{\downset}{\mathop{\downarrow}\!}
\newcommand{\auv}[1]{``#1''}
\newcommand{\conjun}{\mathrel{\wedge}}
\newcommand{\disjun}{\mathrel{\vee}}
\newcommand{\adj}{^\star}
\newcommand{\after}{\mathrel{\circ}}
\newcommand{\Cat}[1]{\ensuremath{\mathbf{#1}}}
\newcommand{\op}{\ensuremath{^{\mathrm{op}}}}
\newcommand{\idmap}[1][]{\ensuremath{\mathrm{id}_{#1}}}
\newcommand{\nul}{\ensuremath{\underline{0}}}
\newcommand{\sai}[1]{[\,#1\,]}
\newcommand{\EndoHom}[1]{\mathcal{E}{\kern-.5ex}\textit{n}{\kern-.2ex}\textit{d}{\kern-.2ex}\textit{o}(#1)}
\newcommand{\ex}[2]{\exists_{#1}.\,#2}
\begin{document}
\title{Foulis quantales and complete orthomodular lattices}
%
%
\author{Michal Botur\inst{1}\orcidID{0000-0002-2362-5113} \and
Jan Paseka\inst{2}\orcidID{0000-0001-6658-6647} \and
Richard Smolka\inst{2}\orcidID{0009-0009-3809-6232}}
\authorrunning{M. Botur et al.}
%
\institute{Department of Algebra and Geometry, Faculty of Science,
Palack\'y University Olomouc, 17.~listopadu~12, 771 46 Olomouc, Czech Republic\\
\email{michal.botur@upol.cz}\\
\and
Department of Mathematics and Statistics, Faculty of Science, Masaryk University, 
Kotl\'a\v rsk\'a 2, 611~37~Brno, Czech Republic\\
\email{
paseka@math.muni.cz}}
\maketitle              
\begin{abstract}
Our approach establishes a natural correspondence between complete orthomodular lattices and certain types of quantales.

Firstly, given a complete orthomodular lattice X, we associate with it a Foulis quantale $\Cat{Lin}(X)$ consisting of its endomorphisms. This allows us to view X as a left module over $\Cat{Lin}(X)$, thereby introducing a novel fuzzy-theoretic perspective to the study of complete orthomodular lattices.

Conversely, for any Foulis quantale $Q$, we associate a complete orthomodular lattice $\sai{Q}$ that naturally forms a left $Q$-module. Furthermore, there exists a canonical homomorphism of Foulis quantales from $Q$ to $\Cat{Lin}(\sai{Q})$.


\keywords{quantale  \and  quantale module    \and  orthomodular lattice 
        \and  linear map  \and  Foulis semigroup  \and  Sasaki projection  \and  dagger category.}
\end{abstract}
\section{Introduction}\label{sect:intro}

Fuzzy logic, a framework for reasoning with uncertainty, is fundamentally connected to residuation, a concept central to various branches of mathematics and computer science. Notable examples include Hájek's Basic Logic (BL) \cite{hajek1998} and Esteva and Godo's Monoidal T-norm Based Logic (MTL) \cite{esteva_godo2001}, which arise from continuous and left-continuous t-norms respectively. The essential role of residuated structures in characterizing these and other fuzzy logics has driven extensive research in this area.

Quantales \cite{mulvey}  are essential structures in theoretical computer science and mathematical physics. They underpin significant frameworks like process algebras and quantum logics, developed by 
Abramsky and Vickers \cite{abramsky_vickers1993}, as well as the quantum mechanical formalisms of Birkhoff and von Neumann \cite{Birkhoff_Neumann1936}. Quantales naturally model concurrent processes and algebraically represent quantum mechanics. Their semantics can be formally described using quantale modules, leading to extensive study of quantales and their modules for their relevance in quantum logic, concurrency theory, and related fields.

We study Foulis quantales and their modules as quantale-like structures for complete orthomodular lattices. This approach introduces a fuzzy-theoretic perspective, as every quantale module naturally carries a fuzzy order \cite{Sol09}.

This paper is structured as follows. Section 2 provides an overview of fundamental algebraic concepts, including orthomodular lattices, dagger categories, quantales, and quantaloids. Drawing inspiration from research on orthomodular lattices \cite{Jac,BLP} and complete orthomodular lattices \cite{BPS}, we investigate the essential properties of the involutive quantale of endomorphisms of a complete orthomodular lattice. This investigation lays the groundwork for a fuzzy-logic-inspired approach to quantum logic.

Section 3 delves into the core theme of the paper: Foulis quantales and their modules. We explore their key characteristics and examine their applications within the context of complete orthomodular lattices. 

This work assumes familiarity with the foundational concepts and results pertaining to quantales, dagger categories and orthomodular lattices. Readers seeking further information on these topics are encouraged to consult \cite{KIR,Rosenthal1996}, \cite{HeJa,Jac}, and \cite{Kalmbach83}.

\section{Complete orthomodular lattices, dagger categories and quantales}\label{sect:prelim}
\subsection{Complete orthomodular lattices} 

The concept of ortholattices provides a broader mathematical framework that extends beyond traditional Boolean algebras. These structures are distinguished by their orthocomplement operation - a sophisticated counterpart to Boolean negation. When we add the property of orthomodularity, we arrive at orthomodular ortholattices, which form a special class with unique characteristics. This refinement proves especially valuable in quantum logic and related fields, where the additional structure captures important mathematical and logical relationships that Boolean algebras cannot express.

\begin{definition}\label{OMLatDef}\label{def:linear_map}{\em  \cite{Jac}
A meet semi-lattice $(X,\conjun, 1)$ is called an {\em ortholattice} if it
comes equipped with a function $(-)^{\perp}\colon X \to X$ satisfying:
\begin{itemize}
   \item $x^{\perp\perp} = x$;
   \item $x \leq y$ implies $y^\perp \leq x^\perp$;
   \item $x \conjun x^\perp = 1^\perp$.
\end{itemize}

\noindent One can then define a bottom element as $0 = 1 \conjun
1^{\perp} = 1^\perp$ and join by $x\disjun y = (x^{\perp}\conjun
y^{\perp})^{\perp}$, satisfying $x\disjun x^{\perp} = 1$.

We write $x\perp y$ if and only if $x\leq y^{\perp}$. 

Such an ortholattice is called {\em orthomodular lattice} if it satisfies (one of)
the three equivalent conditions:
\begin{itemize}
\item $x \leq y$ implies $y = x \disjun (x^\perp \conjun y)$;

\item $x \leq y$ implies $x = y \conjun (y^\perp \disjun x)$;

\item $x \leq y$ and $x^{\perp} \conjun y = 0$ implies $x=y$.
\end{itemize}
An orthomodular lattice $X$  is called a {\em complete orthomodular lattice} if it is also a complete lattice.
}
\end{definition}

We introduce a categorical framework for complete orthomodular lattices by constructing a category $\mathbf{SupOMLatLin}$. The objects are complete orthomodular lattices, and morphisms are linear maps between them (following the treatment in \cite{BLP,BPS}).

\begin{definition}\label{def:SupOMLatLin}{\em \cite{BPS}
The category \Cat{SupOMLatLin} has complete orthomodular
lattices as objects.
A morphism $f \colon X\rightarrow Y$ in \Cat{SupOMLatLin} is a 
function $f \colon X\rightarrow Y$ between the underlying sets such that 
there is a function $h \colon Y \to X$ and, 
for any $x \in X$ and $y \in Y$,
\[ f(x) \perp y \text{ if and only if } x \perp h(y). \]
We say that $h$ is an {\it adjoint} of a {\em linear map} $f$. 
It is clear that adjointness is a symmetric property: if a map $f$ possesses an adjoint $h$, then $f$ is also an adjoint of $h$. 
We denote $\Cat{Lin}(X,Y)$ the set of all linear maps from $X$ to $Y$.
If $X=Y$ we put $\Cat{Lin}(X)=\Cat{Lin}(X,X)$.

Moreover, a map $f \colon X \to X$ is called {\it self-adjoint} if $f$ is an adjoint of itself.

The identity morphism on $X$ is the self-adjoint identity map $\idmap[X] \colon X\rightarrow X$. Composition of $\smash{X \stackrel{f}{\rightarrow} Y
  \stackrel{g}{\rightarrow} Z}$ is given by usual composition of maps.}
\end{definition}

Our guiding example is the following construction. Let $\mathcal{H}$ be a Hilbert space and consider $\mathcal{C}(\mathcal{H})$, the set of all closed subspaces of $\mathcal{H}$. When equipped with the operations $\wedge$ (intersection) and $\perp$ (orthogonal complementation), $\mathcal{C}(\mathcal{H})$ forms a complete orthomodular lattice. Moreover, for Hilbert spaces $\mathcal{H}_1$ and $\mathcal{H}_2$, any bounded linear operator $T: \mathcal{H}_1 \rightarrow \mathcal{H}_2$ induces a linear map $\Phi_T: \mathcal{C}(\mathcal{H}_1) \rightarrow \mathcal{C}(\mathcal{H}_2)$ between the corresponding lattices of closed subspaces. The adjoint map $\Phi_T^*$ is naturally determined by $T^*$, the adjoint of the original operator.

\begin{lemma} \label{lem:lattice-adjoint}{\rm \cite[Lemma 2.6]{BPS}}
Let $f: X \rightarrow Y$ be a map between complete orthomodular lattices. The following  three key properties of $f$ are equivalent:
\begin{enumerate}
    \item $f$ possesses a right order-adjoint;
    \item $f$ admits an adjoint in the sense of Definition~\ref{def:linear_map};
    \item $f$ preserves arbitrary joins (i.e., is join-complete).
\end{enumerate}
\end{lemma}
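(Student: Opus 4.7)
The plan is to establish the three-way equivalence by showing $(1) \Leftrightarrow (3)$ via the classical adjoint functor argument for complete lattices, and then $(1) \Leftrightarrow (2)$ by a conversion formula that uses the orthocomplement to translate between an order-theoretic right adjoint and an orthogonality-preserving adjoint in the sense of Definition~\ref{def:linear_map}.

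For $(3) \Rightarrow (1)$, I would use the standard recipe: assuming $f$ preserves arbitrary joins, define a candidate right adjoint $g\colon Y \to X$ by $g(y) = \bigvee\{x \in X \mid f(x) \leq y\}$. Completeness of $X$ makes this well-defined, and join-preservation of $f$ gives $f(g(y)) \leq y$, yielding the Galois connection $f(x) \leq y \iff x \leq g(y)$. The converse $(1) \Rightarrow (3)$ is immediate from the fact that a map with a right order-adjoint preserves all existing joins.

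For $(1) \Rightarrow (2)$, given a right order-adjoint $g$ of $f$, I would define $h\colon Y \to X$ by $h(y) = g(y^\perp)^\perp$ and verify
\[
f(x) \perp y \iff f(x) \leq y^\perp \iff x \leq g(y^\perp) \iff x \leq h(y)^\perp \iff x \perp h(y),
\]
using only the definition of $\perp$ in terms of $\leq$ and $(-)^\perp$, the involution law $z^{\perp\perp} = z$, and the adjunction $f \dashv g$. Conversely, for $(2) \Rightarrow (1)$, I would set $g(y) = h(y^\perp)^\perp$ and run the same chain of equivalences in reverse to obtain $f(x) \leq y \iff x \leq g(y)$, so $g$ is a right order-adjoint of $f$.

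The proof is essentially a bookkeeping exercise in Galois connections and orthocomplements; no serious obstacle is expected. The only subtle point is making sure that the involution $(-)^\perp$ is genuinely order-reversing and involutive so that the chain of biconditionals in the $(1) \Leftrightarrow (2)$ step goes through cleanly — but this is guaranteed by the ortholattice axioms of Definition~\ref{OMLatDef}, and orthomodularity is not even needed. Completeness is used only to ensure that the candidate $g$ in $(3) \Rightarrow (1)$ is well-defined as a genuine supremum.
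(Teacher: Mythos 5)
Your proof is correct; the paper itself does not reproduce an argument for this lemma but simply cites it as \cite[Lemma~2.6]{BPS}, and your route --- the poset adjoint functor theorem for $(1)\Leftrightarrow(3)$ together with the translation $h(y)=g(y^\perp)^\perp$ (and its inverse $g(y)=h(y^\perp)^\perp$) for $(1)\Leftrightarrow(2)$ --- is the standard one that the cited source follows. Your observations that only the ortholattice axioms (involution and order-reversal of $(-)^\perp$) plus completeness are needed, and that orthomodularity plays no role here, are also accurate.
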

This equivalence provides multiple perspectives for understanding linear maps in the context of complete orthomodular lattices.

The Sasaki projection, named after Shôichirô Sasaki, is a crucial concept in orthomodular lattice theory. It defines a unique projection operation that reflects the non-classical, quantum-like behavior of these structures, offering insights into relationships between elements beyond the scope of classical Boolean logic.

\begin{definition}\label{def:Sasaki projection}
		Let $X$ be an orthomodular lattice. Then the map $\pi_a:X\to X$, $y\mapsto a\wedge(a^\perp\vee y)$ is called the \emph{Sasaki projection} to $a\in X$.
	\end{definition}

    The following properties of Sasaki projections are established in  \cite{LiVe} and are crucial for our discussion:

    \begin{lemma}\label{lem:Sasaki projection facts}
		Let $X$ be an orthomodular lattice, and let $a\in X$. Then for each $y,z\in L$ we have
		\begin{itemize}
			\item[(a)] $y\leq a$ if and only if $\pi_a(y)=y$;
			\item[(b)] $\pi_a(\pi_a(y^\perp)^\perp))\leq y$;
			\item[(c)] $\pi_a(y)=0$ if and only if $y\leq a^\perp$;
			\item[(d)] $\pi_a(y)\perp z$ if and only if $y\perp \pi_a(z)$.
		\end{itemize}
	\end{lemma}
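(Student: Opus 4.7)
The four items have a natural ordering: (a) and (c) describe how $\pi_a$ acts on the intervals $[0,a]$ and $[0,a^\perp]$ and yield to direct manipulation of the definition, while (b) and (d) express structural properties that are best deduced once (a) is in hand. My plan is therefore to dispatch (a) first, use it as a lemma, and then treat (c), (b), (d) in that order.

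For (a), one direction is immediate since $\pi_a(y)\leq a$. Conversely, when $y\leq a$ we have $y\perp a^\perp$, so $y$ commutes (in the orthomodular sense) with both $a$ and $a^\perp$. The Foulis--Holland theorem then makes the subortholattice generated by $\{y,a,a^\perp\}$ distributive, giving
\[
\pi_a(y)=a\wedge(a^\perp\vee y)=(a\wedge a^\perp)\vee(a\wedge y)=0\vee y=y.
\]
For (c), the implication $y\leq a^\perp\Rightarrow\pi_a(y)=0$ is by substitution. Conversely, applying orthomodularity to $a^\perp\leq a^\perp\vee y$ gives $a^\perp\vee y=a^\perp\vee\pi_a(y)$, so $\pi_a(y)=0$ forces $y\leq a^\perp\vee y=a^\perp$.

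Item (b) should fall out of (a) via a De Morgan calculation: $\pi_a(y^\perp)^\perp=a^\perp\vee(a\wedge y)$, and since $a\wedge y\leq a$ item (a) applied to $a\wedge y$ yields
\[
\pi_a\bigl(\pi_a(y^\perp)^\perp\bigr)=a\wedge\bigl(a^\perp\vee(a\wedge y)\bigr)=\pi_a(a\wedge y)=a\wedge y\leq y.
\]
For (d), the efficient route is to first establish the Sasaki adjunction
\[
\pi_a(y)\leq b \iff y\leq a^\perp\vee(a\wedge b),
\]
which again uses orthomodularity on $a^\perp\leq a^\perp\vee y$ together with (a) applied to $a\wedge b\leq a$, and then specialise to $b=z^\perp$. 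Since De Morgan gives $a^\perp\vee(a\wedge z^\perp)=\pi_a(z)^\perp$, the desired equivalence $\pi_a(y)\perp z\iff y\perp\pi_a(z)$ follows at once.

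The main obstacle I anticipate is (d): the Sasaki adjunction is essentially the content of the self-adjointness claim, and one must be careful to invoke orthomodularity correctly in both directions without circularly relying on (d) itself. Once the adjunction is in place, specialisation is painless; everything else is bookkeeping via De Morgan and item (a).
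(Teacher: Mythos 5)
Your proof is correct, but note that the paper does not prove this lemma at all: it is quoted verbatim from Lindenhovius--Vetterlein \cite{LiVe}, so there is no internal argument to compare against. Your self-contained derivation is sound throughout: the adjunction $\pi_a(y)\leq b \iff y\leq a^\perp\vee(a\wedge b)$ is established correctly in both directions (the forward direction via the orthomodular identity $a^\perp\vee y = a^\perp\vee\pi_a(y)$, the backward direction via item (a) applied to $a\wedge b\leq a$), and specialising to $b=z^\perp$ together with De Morgan does give (d); items (b) and (c) check out as written. One simplification worth noting: for the converse of (a) you do not need Foulis--Holland, since $y\leq a \Rightarrow y = a\wedge(a^\perp\vee y)$ is literally the second of the three equivalent orthomodularity conditions listed in Definition~\ref{OMLatDef}. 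Your anticipated obstacle for (d) is real in the sense that the adjunction \emph{is} the content of self-adjointness, but your two-step derivation of it avoids any circularity, since both directions rest only on orthomodularity and on (a).
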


    The definition that follows brings to mind the kernel and range, two key ideas in the study of morphisms between complete orthomodular lattices. One important difference between them is that the range only produces a complete lattice, but a kernel always forms a complete orthomodular lattice.

    \begin{definition}\label{def:kernel}
    Let $f \colon X \to Y$ be a morphism of complete orthomodular lattices. 
    We define the {\it kernel} and the {\it range} of $f$, respectively, by
\begin{align*}
\kernel f \;=\; & \{ x \in X \colon f(x) = 0 \}, \\
\image f \;=\; & \{ f(x) \colon x \in X \}.
\end{align*}
\end{definition}

\subsection{Dagger categories}

Dagger categories, crucial in categorical quantum mechanics, are categories with an involutive functor (the dagger) defining adjoints for morphisms. This structure enables the representation and analysis of quantum processes and reversible computations.

We observe that this idea has been present in the literature since the 1960s and was usually taken into consideration in a particular context. With the publication of Abramsky and Coecke's work \cite{AbCo}, it became part of the mainstream discourse regarding the fundamentals of quantum mechanics. P. Selinger is credited with coining the term \auv{dagger category} \cite{Sel}.

\begin{definition}
\label{DagcatDef}\rm 
A {\it dagger} on a category \C\ is a functor ${}^\star \colon \C\op \to \C$ that is involutive and the identity on objects. A category equipped with a dagger is called a {\it dagger category}.

Let \C\ be a dagger category. A morphism $f \colon A \to B$ is called a {\it dagger monomorphism} if $f^{\star} \circ f={\idmap}_A$, and $f$ is called a {\it dagger isomorphism} if 
$f^{\star} \circ f = {\idmap}_A$ and $f \circ f^\star = {\idmap}_B$. A {\it dagger automorphism} is a dagger isomorphism $f \colon A \to A$.
\end{definition}

Limits and colimits are dual in dagger categories. Applying the dagger ${}\adj$ to a limit cone yields a colimit cone, and conversely.

The category of complete orthomodular lattices with linear maps is shown to constitute a dagger category by the following theorem. 

\begin{theorem}\label{OMLisdagger}{\rm\cite[Theorem 2.7]{BPS}}
    \Cat{SupOMLatLin} is  a dagger category.
\end{theorem}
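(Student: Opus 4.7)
The plan is to define the dagger by sending each linear map to its adjoint and then to verify the three axioms of Definition~\ref{DagcatDef}: identity on objects, contravariance on morphisms, and involutivity. The conceptual kernel of the argument is that the orthogonality relation on a complete orthomodular lattice is rich enough to pin down morphisms uniquely from their behaviour with respect to $\perp$.

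The first step I would carry out is to prove that the adjoint of a linear map is unique. Suppose $h_1, h_2 \colon Y \to X$ both witness the adjointness condition for $f \colon X \to Y$. Then for every $x \in X$ and $y \in Y$ we have $x \perp h_1(y) \iff f(x) \perp y \iff x \perp h_2(y)$, i.e., $x \leq h_1(y)^\perp \iff x \leq h_2(y)^\perp$ for all $x \in X$. Taking $x = h_1(y)^\perp$ and $x = h_2(y)^\perp$ in turn yields $h_1(y)^\perp = h_2(y)^\perp$, and applying $(-)^\perp$ gives $h_1(y) = h_2(y)$. This allows me to denote the unique adjoint of a linear map $f$ by $f^\star$.

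Next I would check the dagger axioms. For the identity, the tautology $x \perp y \iff x \perp y$ shows that $\idmap[X]$ is self-adjoint, hence $(\idmap[X])^\star = \idmap[X]$. For contravariance, given $f \colon X \to Y$ and $g \colon Y \to Z$, the chain
\[
(g \after f)(x) \perp z \iff g(f(x)) \perp z \iff f(x) \perp g^\star(z) \iff x \perp (f^\star \after g^\star)(z)
\]
shows that $f^\star \after g^\star$ satisfies the defining property of the adjoint of $g \after f$, so by uniqueness $(g \after f)^\star = f^\star \after g^\star$; in particular the composition is again a linear map, confirming that $(-)^\star$ sends \Cat{SupOMLatLin}-morphisms to \Cat{SupOMLatLin}-morphisms. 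Finally, the symmetry of the adjointness relation emphasised in Definition~\ref{def:linear_map} says that if $h$ is an adjoint of $f$ then $f$ is an adjoint of $h$; by uniqueness this reads $(f^\star)^\star = f$, giving involutivity.

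The main obstacle is really just the uniqueness step at the outset, because every other verification reduces immediately to a manipulation of the biconditional $f(x) \perp y \iff x \perp f^\star(y)$. Uniqueness is the only place where we must use specific structural information about complete orthomodular lattices, namely that an element is determined by its downset, so that the $\perp$-relation separates points. Once uniqueness is in place, the dagger axioms fall out by routine symbol-pushing and the construction is forced.
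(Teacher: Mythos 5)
Your proposal is correct and follows the standard route one would expect for \cite[Theorem 2.7]{BPS} (the paper itself only cites that result rather than reproving it): establish uniqueness of the adjoint using that $x\leq a \Leftrightarrow x\leq b$ for all $x$ forces $a=b$ together with $a^{\perp\perp}=a$, then read off identity-preservation, contravariance on composites (which simultaneously shows composites of linear maps are linear), and involutivity from the symmetry of the adjointness relation. No gaps.
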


Two significant findings that are worth mentioning are as follows.

  \begin{corollary}\label{Sasself}{\rm \cite[Corollary 3.5, Corollary 4.6]{BPS}}
     Let $X$ be a complete orthomodular lattice, and let $a\in X$. Then 
     $\pi_a$ is self-adjoint, idempotent and 
     $\image \pi_a=\downset a= \downset\pi_a(1)$. Moreover, we have a factorization 
     \begin{equation}
  \label{OMLImageEqn}
\begin{tikzcd}
& \downset a\arrow[r, hook, "\pi_{a}|_{\downset a}"] & X \\
& X \arrow[u, "\pi_{a}|^{\downset a}", dashed] \arrow[ur, "\pi_a"'] & 
\end{tikzcd}
\end{equation}
such that $\pi_{a}|^{\downset a}$ is dagger epi, $\pi_{a}|_{\downset a}$ is dagger mono, 
$\pi_{a}|_{\downset a}\circ \pi_{a}|^{\downset a}=\pi_{a}$, $\pi_{a}|^{\downset a}=\left(\pi_{a}|_{\downset a}\right)^{*}$, and 
$\pi_{a}|^{\downset a}\circ \pi_{a}|_{\downset a}=\idmap[{\downset a}]$.
 \end{corollary}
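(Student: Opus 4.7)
The plan is to first dispatch the three algebraic properties of $\pi_a$ directly from Lemma~\ref{lem:Sasaki projection facts}, then construct the claimed factorization by defining $\pi_a|_{\downset a}$ as the inclusion and $\pi_a|^{\downset a}$ as $\pi_a$ with codomain restricted to $\downset a$, and finally verify that these maps are linear, mutually adjoint in the sense of Definition~\ref{def:linear_map}, and satisfy the dagger-mono / dagger-epi identities.

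For the first part, self-adjointness is immediate from item (d) of Lemma~\ref{lem:Sasaki projection facts}, which is literally the defining condition in Definition~\ref{def:linear_map} for $h = f$. Idempotency follows because $\pi_a(y) = a \wedge (a^\perp \vee y) \leq a$, so by (a) applied to $\pi_a(y)$ we get $\pi_a(\pi_a(y)) = \pi_a(y)$. The same observation $\pi_a(y) \leq a$ gives $\image \pi_a \subseteq \downset a$, while (a) applied to any $y \leq a$ shows $y \in \image\pi_a$, so $\image \pi_a = \downset a$. Finally, $\pi_a(1) = a \wedge (a^\perp \vee 1) = a \wedge 1 = a$, giving $\downset a = \downset \pi_a(1)$.

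For the factorization, by (a) the map $\pi_a$ restricted to $\downset a$ is just the inclusion $\pi_a|_{\downset a} \colon \downset a \hookrightarrow X$, and joins in the principal ideal $\downset a$ agree with joins in $X$, so this inclusion is join-preserving, hence linear by Lemma~\ref{lem:lattice-adjoint}. The codomain-restricted map $\pi_a|^{\downset a}\colon X \to \downset a$, $y \mapsto \pi_a(y)$, is linear because $\pi_a$ itself is (being self-adjoint). The identity $\pi_a|_{\downset a}\circ \pi_a|^{\downset a} = \pi_a$ is then just the definition of $\pi_a|^{\downset a}$ followed by inclusion, and $\pi_a|^{\downset a}\circ \pi_a|_{\downset a} = \idmap[\downset a]$ is again (a).

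The main (and only slightly delicate) step is to show $\pi_a|^{\downset a} = (\pi_a|_{\downset a})^\star$; once this is done, the identity $\pi_a|^{\downset a}\circ \pi_a|_{\downset a} = \idmap[\downset a]$ simultaneously witnesses $\pi_a|_{\downset a}$ as dagger mono and (by applying $\star$) $\pi_a|^{\downset a}$ as dagger epi. For the adjoint claim one unwinds Definition~\ref{def:linear_map}: for $x \in \downset a$ and $y \in X$, the orthocomplement of $z \in \downset a$ inside $\downset a$ is $a \wedge z^\perp$, so $x \perp \pi_a|^{\downset a}(y)$ in $\downset a$ means $x \leq a \wedge \pi_a(y)^\perp$, while $\pi_a|_{\downset a}(x) \perp y$ in $X$ means $x \leq y^\perp$, equivalently (since $x \leq a$) $x \leq a \wedge y^\perp$. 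The two conditions coincide because $a \wedge \pi_a(y)^\perp = a \wedge y^\perp$: indeed $a \wedge y^\perp \leq a$, so by (a) $\pi_a(a \wedge y^\perp) = a \wedge y^\perp$, and a direct de Morgan computation (or, alternatively, a short argument using item (d) applied to the inequalities $a \wedge y^\perp \perp \pi_a(y)$ and $a \wedge \pi_a(y)^\perp \perp y$) identifies $a \wedge \pi_a(y)^\perp$ with $\pi_a(a \wedge y^\perp)$. This single identity is where the orthomodular law really enters, and it is the heart of the argument.
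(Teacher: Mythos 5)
Your proof is correct, and since the paper itself offers no argument for this corollary (it simply cites Corollaries 3.5 and 4.6 of the reference \cite{BPS}), your direct verification is exactly the standard one: item (d) of Lemma~\ref{lem:Sasaki projection facts} gives self-adjointness, item (a) gives idempotency, the image computation, and the two composition identities, and the adjointness of the restriction and corestriction reduces to the identity $a \conjun \pi_a(y)^\perp = a \conjun y^\perp$, which you correctly isolate as the one place where orthomodularity (via item (a) applied to $a \conjun y^\perp \leq a$) is genuinely needed. The only point worth making explicit is that $\downset a$, equipped with the relative orthocomplement $z \mapsto a \conjun z^\perp$, is itself a complete orthomodular lattice, so that it is a legitimate object of $\Cat{SupOMLatLin}$ and the factorization lives in the right category.
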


 \begin{lemma} \label{KernelLemma}{\rm \cite[Corollary 3.9]{BPS}}
Let $f \colon X \to Y$ be a morphism of complete orthomodular lattices. Then 
$\kernel f=\downset f^{*}(1)^{\perp}$ is a complete orthomodular lattice.
\end{lemma}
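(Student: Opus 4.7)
The plan is to prove the lemma in two independent steps: first establish the set-theoretic identity $\kernel f = \downset f^{*}(1)^{\perp}$, and then argue that any principal downset in a complete orthomodular lattice is itself a complete orthomodular lattice.

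For the first step, I would rewrite the condition $f(x) = 0$ as the orthogonality statement $f(x) \perp 1$. These are equivalent because $1^{\perp} = 0$, and $f(x) \perp 1$ unfolds to $f(x) \leq 1^{\perp} = 0$. Applying the adjointness property from Definition~\ref{def:linear_map} with $y = 1$, this is equivalent to $x \perp f^{*}(1)$, i.e.\ to $x \leq f^{*}(1)^{\perp}$. The identity $\kernel f = \downset f^{*}(1)^{\perp}$ follows.

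For the second step, set $a = f^{*}(1)^{\perp}$. The principal downset $\downset a$ is closed under arbitrary meets and joins computed in $X$ (joins of subsets of $\downset a$ remain bounded above by $a$), so $\downset a$ is a complete lattice with top $a$ and bottom $0$. Equipping it with the relative orthocomplement $x \mapsto a \conjun x^{\perp}$ yields an orthocomplementation; the nontrivial involutivity $a \conjun (a \conjun x^{\perp})^{\perp} = a \conjun (a^{\perp} \disjun x) = x$ is precisely the orthomodular law in $X$ applied to $x \leq a$, and orthomodularity of $\downset a$ then reduces to that of $X$ by the same observation. Alternatively, one can invoke Corollary~\ref{Sasself}, which already exhibits $\downset a = \image \pi_a$ as a dagger-mono image of the Sasaki projection $\pi_a$, carrying the requisite complete orthomodular structure.

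There is no substantial obstacle here: the argument is a direct unpacking of the adjointness condition together with a standard fact about principal downsets in orthomodular lattices. The only conceptual step is recognising that $f(x) = 0$ is expressible as an orthogonality condition, which is exactly what the adjoint $f^{*}$ is designed to control.
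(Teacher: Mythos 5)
Your proof is correct and follows the standard argument behind the cited result [BPS, Corollary 3.9], which this paper does not reproduce: unfold $f(x)=0$ as $f(x)\perp 1$, apply adjointness to obtain $x\perp f^{*}(1)$, hence $\kernel f=\downset f^{*}(1)^{\perp}$, and then use the classical fact that a principal downset $\downset a$ of a complete orthomodular lattice is again a complete orthomodular lattice under the relative orthocomplement $x\mapsto a\conjun x^{\perp}$, with involutivity and orthomodularity both reducing to the orthomodular law of $X$. The only caveat is that your alternative route via Corollary~\ref{Sasself} is mildly circular, since that corollary already presupposes $\downset a$ is an object of $\Cat{SupOMLatLin}$; the direct argument you give first is the one to keep.
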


The category \Cat{SupOMLatLin} possesses a zero object $\nul$ (see \cite{BPS}), characterized by the existence of unique morphisms to and from any object. This zero object is the one-element orthomodular lattice $\{0\}$. For any complete orthomodular lattice X, the unique morphism $\nul \to X$ maps 0 to 0.

Identifying $\nul$ with the principal downset $\downset 0$, this morphism is a dagger mono\-morphism with adjoint $f^*: X \to \nul$ given by 
$f^*(x)=\pi_0(x)=0$. We denote by $0_{X,Y}: X \to \nul \to Y$ the unique morphism factoring through the zero object for any objects $X$ and $Y$.

The following  definition introduces kernels and their dagger counterparts, laying the foundation for studying algebraic structures within the framework of category theory.

\begin{definition}\label{weakkernel}{\em \cite{HeJa}
\begin{enumerate}
\item[]
\item For a morphism $f \colon A \to B$ 
in arbitrary category with zero morphisms, we say that a morphism  
$k\colon  K \to A$ is a {\em  kernel} of $f$ if 
$fk=0_{K,B}$, and if $m\colon M\to A$ satisfies $fm=0_{M,B}$  then  there is a unique morphism 
$u \colon M \to K$ such that $ku = m$. 

We sometimes write $\kernel f$ for $k$ or $K$.

\[
\begin{array}{@{}c c c}
\begin{tikzcd}[row sep=1cm, column sep=.75cm, ampersand replacement=\&]
	A\arrow[rr, "f"] \& \& B\\
	\& K\arrow[ul, "k"']\arrow[ur, "0_{K,B}"']\\
	\& M\arrow[uul, bend left, "m"]\arrow[u,dashed,"\exists!u"]\arrow[uur, bend right, "0_{M,B}"']
\end{tikzcd}&\quad\quad\quad &
\begin{tikzcd}[row sep=1cm, column sep=.75cm, ampersand replacement=\&]
	A\arrow[rr, "f"] \arrow[dr, shift right, swap, "k^{*}"]\& \& B\\
	\& K\arrow[ul,shift right, "k"']\arrow[ur, "0_{K,B}"']\\
	\& M\arrow[uul, bend left, "m"]\arrow[u,dashed,"k^{*}m"]\arrow[uur, bend right, "0_{M,B}"']
\end{tikzcd}
\end{array}
\]

 \item   For a morphism $f \colon A \to B$ 
in arbitrary dagger category with zero morphisms, we say that a morphism  
$k\colon  K \to A$ is a {\em weak dagger kernel} of $f$ if 
$fk=0_{K,B}$, and if $m\colon M\to A$ satisfies $fm=0_{M,B}$  then $kk^{*}m=m$.

A {\em weak dagger kernel category} is a dagger category with zero morphisms where every morphism has a weak dagger kernel.

\item A {\em dagger kernel category} is a dagger category with a zero object, hence zero morphisms, where each morphism $f$ has a weak dagger kernel $k$ (called {\em dagger kernel}) 
that additionally satisfies $k^{*}k=1_K$.

\end{enumerate}}
\end{definition} 

Every dagger kernel is a kernel, and in $\Cat{SupOMLatLin}$, the converse holds: all kernels are dagger kernels. Theorem \ref{OMLatLinDagKerCatThm} thus shows that $\Cat{SupOMLatLin}$ forms a dagger kernel category.

\begin{theorem}{\rm \cite[Theorem 3.12]{BPS}}
\label{OMLatLinDagKerCatThm}
  The category $\Cat{SupOMLatLin}$ is a dagger kernel category. The
  dagger kernel of a morphism $f \colon X \to Y$ is 
  $\kernel(f)=\pi_{k}|_{\downset k} \colon\!  \downset k \to X$, where $k = f^*(1)^{\perp} \in X$, like in
  Lemma~\ref{KernelLemma}. Moreover, $f\circ \kernel(f)=0_{\downset k, Y}$, $\pi_{k}=\kernel(f)\circ \kernel(f)^{*}$ and 
  $\idmap[{\downset k}]=\kernel(f)^{*}\circ \kernel(f)$.
\end{theorem}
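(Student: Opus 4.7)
The plan is to exhibit $\kernel(f) := \pi_{k}|_{\downset k} \colon \downset k \to X$, with $k = f^{*}(1)^{\perp}$, as a dagger kernel of $f$, leaning heavily on the Sasaki factorization already established in Corollary~\ref{Sasself}. Four things need to be checked: (i) $f \circ \kernel(f) = 0_{\downset k, Y}$; (ii) the universal weak-kernel property; (iii) the dagger monic identity $\kernel(f)^{*} \circ \kernel(f) = \idmap[{\downset k}]$; (iv) $\pi_{k} = \kernel(f) \circ \kernel(f)^{*}$.

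The real work is in (i) and (ii), both driven by the adjointness characterization of linear maps. For (i), any $y \in \downset k$ satisfies $y \leq f^{*}(1)^{\perp}$, so $y \perp f^{*}(1)$; by the defining equivalence of an adjoint this gives $f(y) \perp 1$, hence $f(y) = 0$. Since $\pi_{k}|_{\downset k}(y) = \pi_{k}(y) = y$ for $y \in \downset k$ by Lemma~\ref{lem:Sasaki projection facts}(a), the composite vanishes pointwise, and therefore factors uniquely through $\nul$. For (ii), given $m \colon M \to X$ with $fm = 0_{M,Y}$, the same adjointness applied at each point yields $m(x) \perp f^{*}(1)$, hence $m(x) \leq k$ for every $x \in M$, and so $\pi_{k} \circ m = m$ by Lemma~\ref{lem:Sasaki projection facts}(a) again. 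Rewriting $\pi_{k} = \pi_{k}|_{\downset k} \circ \pi_{k}|^{\downset k}$ and using $\pi_{k}|^{\downset k} = (\pi_{k}|_{\downset k})^{*}$ from Corollary~\ref{Sasself}, this is exactly the weak dagger kernel identity $\kernel(f) \circ \kernel(f)^{*} \circ m = m$.

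Properties (iii) and (iv) are then immediate from Corollary~\ref{Sasself}: the same factorization yields $\kernel(f)^{*} \circ \kernel(f) = \pi_{k}|^{\downset k} \circ \pi_{k}|_{\downset k} = \idmap[{\downset k}]$ and $\kernel(f) \circ \kernel(f)^{*} = \pi_{k}|_{\downset k} \circ \pi_{k}|^{\downset k} = \pi_{k}$. Uniqueness of the mediating morphism in the ordinary (non-weak) kernel diagram is automatic from (iii): any $u \colon M \to \downset k$ satisfying $\kernel(f) \circ u = m$ must obey $u = \kernel(f)^{*} \circ \kernel(f) \circ u = \kernel(f)^{*} \circ m$.

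I expect the principal obstacle to be not mathematical depth but bookkeeping at the interface between the categorical zero morphism $0_{M,Y}$ (defined as the composite through the zero object $\nul$) and the pointwise statement $f(m(x)) = 0$; one must verify that the two notions coincide in $\Cat{SupOMLatLin}$ so that the adjointness relation $f(x) \perp y \Leftrightarrow x \perp f^{*}(y)$ can be applied. Once that identification is spelled out, all of (i)--(iv) reduce to a mechanical combination of the adjointness equivalence and the Sasaki-projection facts already available.
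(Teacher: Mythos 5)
Your proof is correct. The paper itself gives no proof of this theorem --- it is cited wholesale from \cite[Theorem 3.12]{BPS} --- so there is no in-paper argument to compare against; but your derivation assembles the statement from precisely the ingredients the paper does supply (the factorization and dagger identities of Corollary~\ref{Sasself}, Lemma~\ref{lem:Sasaki projection facts}(a), Lemma~\ref{KernelLemma}, and the adjointness equivalence $f(x)\perp y\Leftrightarrow x\perp f^*(y)$), and each of your four claims goes through as described. The bookkeeping point you flag is harmless: since the zero object is $\nul=\{0\}$, the composite $0_{M,Y}$ is forced to be the constant-zero map, so $fm=0_{M,Y}$ and $f(m(x))=0$ for all $x\in M$ are literally the same condition.
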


\subsection{Quantales and quantaloids}

While complete orthomodular lattices give us a snapshot of a quantum system's possible states, focusing on testable properties, quantales take a different viewpoint. They provide a dynamic perspective, allowing us to reason about the evolution of the system and the structure of quantum actions that cause these changes.

Mulvey (\cite{mulvey}) coined the term \auv{quantales} as a \auv{quantization} of the term \auv{location} during the Oberwolfach Category Meeting in the early 1980s.  Quantales, which were first inspired by studies in topology and functional analysis, provide a strong foundation for simulating complex systems with non-classical logics, especially in domains such as computer science, abstract algebra, and quantum mechanics. The realization that quantales, like Boolean algebras for classical propositional logic, give semantics for propositional linear logic was a crucial advancement in quantales theory. Naturally, quantales appear as subgroups, lattices of ideals, or other appropriate algebraic substructures.

\begin{definition}\label{qsemiq}{\rm \cite{KIR}
\begin{enumerate}
\item A {\em quantale\/} is a complete lattice $Q$ with an associative
binary multiplication satisfying
$$
x\cdot\bigsqcup\limits_{i\in I}
x_i=\bigsqcup\limits_{i\in I}(x\cdot
x_i)\ \ \hbox{and}\ \ \left(\bigsqcup\limits_{i\in I}x_i\right)\cdot
x=\bigsqcup\limits_{i\in I}(x_i\cdot x)
$$
for all $x,\,x_i\in Q,\,i\in I$ ($I$ is a set). Here 
$\bigsqcup\limits_{i\in I} x_i$ denotes the join of the set 
$\{x_i\colon {i\in I}\}$.
A quantale $Q$ is said to be {\em unital\/}  if
there is an element $e\in Q$ called {\em unit} such that
$$
e\cdot a = a = a\cdot e
$$
\noindent
for all $a\in Q$. The element 
$0=\bigsqcup \emptyset$ is a {\em zero
element} of $Q$: $0 \cdot s = 0 = s\cdot 0$.

We denote by $\sqsubseteq$ the order relation on $Q$.

\item By an {\em involutive  quantale} will be meant
a  quantale $Q$ together with a semigroup
 involution $^{*}$ satisfying
$$
(\bigsqcup a_{i})^{*}=\bigsqcup a_{i}^{*}
$$
\noindent for all $a_{i}\in Q$. In the event that $Q$
is also unital, then necessarily $e$ is selfadjoint, i.e.,
$$
e=e^{*}.
$$
 
\end{enumerate}
We also define $s\leq t$ if 
and only if $s=t\cdot s$, and $s\perp t$ if 
and only if $0=s^{*}\cdot t$ for all $s, t\in Q$.}
\end{definition}

Quantaloids \cite{Rosenthal1996} are a versatile and powerful generalization of quantales, enriched in sup-lattices, and are used extensively in category theory and logic. They provide a robust framework for studying enriched categories, automata theory, and other mathematical structures, highlighting their importance in both theoretical and applied mathematics.

\begin{definition}\label{def:quantaloid}{\em 
\begin{enumerate}
\item A {\em quantaloid} \Cat{Q}
 is a category enriched over the category \Cat{Sup} of complete lattices with supremum preserving maps.

 This means that for any objects $A$ and $B$ in the quantaloid, 
 the hom-object $Hom(A,B)$ is not merely a set but a complete lattice, such that the composition of morphisms preserves all joins:
 \begin{align*}
\left(\bigvee_{i\in I} f_i\right)\after 
\left(\bigvee_{j\in J} g_j\right)=
\left(\bigvee_{i\in I, j\in J} f_i\after g_j\right)
 \end{align*}
 
 \item A quantaloid $\Cat{Q}$ with involution, i.e., a dagger category 
  for which 
\begin{align*}
\left(\bigvee_i p_i\right)^* &= \bigvee_i p_i^* 
\end{align*}
for all morphisms $p_i \in \Cat{Q}$, will be called {\em involutive}.
\end{enumerate}
}
\end{definition}

 \begin{example} (see \cite[page 355]{mulvey_pelletier} and 
 \cite[Example 2.4]{Gylys1999Involutive})
 Let \Cat{SupOLatLin} be the category of complete orthocomplemented  lattices with join-preserving maps as morphisms. The composition in \Cat{SupOLatLin} is given by the standard composition of mappings, and the identity morphisms serve as units. One can naturally view \Cat{SupOLatLin} as an involutive quantaloid, where the join is defined by the pointwise ordering of mappings, and the involution ${}^{\dagger}$ on \Cat{SupOLatLin} is given by
\begin{align*}
{f^{\dagger}}(t) = (\bigvee\{s\in X\mid f(s)\leq t^{\perp}\})^{\perp} 
\end{align*}
for each morphism $f\colon X \to Y$ in \Cat{SupOLatLin} and every element $t \in Y$.
\end{example}

\begin{lemma}\label{dagjeadj}
Let $f: X \rightarrow Y$ be a linear map between complete orthomodular lattices. Then ${f}^{\dagger}=f\adj$. 
\end{lemma}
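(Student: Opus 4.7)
The plan is to unfold the defining formula of ${f}^{\dagger}$ and use the adjointness relation characterizing $f^\star$ to rewrite the set whose join is being taken. First I would fix $t \in Y$ and translate the condition $f(s) \leq t^\perp$ using orthogonality: by definition, $f(s) \leq t^\perp$ is equivalent to $f(s) \perp t$. Since $f$ is a linear map with adjoint $f^\star$ (given by Definition~\ref{def:linear_map}, and existing by Lemma~\ref{lem:lattice-adjoint}), the adjointness relation gives $f(s) \perp t$ if and only if $s \perp f^\star(t)$, which in turn means $s \leq f^\star(t)^\perp$.

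Consequently the set $\{s \in X \mid f(s) \leq t^\perp\}$ is exactly the principal downset $\downset f^\star(t)^\perp$, whose join is $f^\star(t)^\perp$ itself. Substituting into the definition of ${f}^{\dagger}$ yields
\[
{f}^{\dagger}(t) = \left(\bigvee \{s \in X \mid f(s) \leq t^\perp\}\right)^\perp = \left(f^\star(t)^\perp\right)^\perp = f^\star(t),
\]
using the involutivity $x^{\perp\perp}=x$ of the orthocomplement. Since this holds for every $t \in Y$, we conclude ${f}^{\dagger} = f^\star$.

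There is essentially no obstacle here: the argument is a one-line unwrapping once one recognizes the orthogonality rewriting. The only subtlety worth noting explicitly in the paper is that the existence of $f^\star$ (so that the rewriting makes sense) is guaranteed by Lemma~\ref{lem:lattice-adjoint}, and that no orthomodularity is actually used beyond the ortholattice identities $x^{\perp\perp} = x$ and the Galois-type characterization of $\perp$ via $\leq$ and $(-)^\perp$.
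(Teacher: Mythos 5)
Your proof is correct and follows essentially the same route as the paper: the same chain of equivalences $f(s)\leq t^{\perp} \Leftrightarrow f(s)\perp t \Leftrightarrow s\perp f\adj(t) \Leftrightarrow s\leq f\adj(t)^{\perp}$ is the heart of both arguments. The only cosmetic difference is that you evaluate the join directly as $f\adj(t)^{\perp}$ and apply $x^{\perp\perp}=x$, whereas the paper concludes by showing $x\perp f^{\dagger}(y)$ iff $x\perp f\adj(y)$ for all $x$; both are fine.
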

\begin{proof} Let $x\in X$ and $y\in Y$. We compute:
\begin{align*}
&x\perp {f^{\dagger}}(y)
   \text{ if and only if } 
   x\leq \bigvee\{s\in X\mid f(s)\leq y^{\perp}\}
    \text{ if and only if } \\
   &x\leq \bigvee\{s\in X\mid f(s)\perp y\}
    \text{ if and only if } 
   x\leq \bigvee\{s\in X\mid s\perp f\adj(y)\}\\
    &\text{if and only if } 
   x\perp f\adj(y).
\end{align*}
We conclude that ${f}^{\dagger}=f\adj$. 
\end{proof}

Given that the category \Cat{SupOLatLin}  of complete orthocomplemented  lattices satisfies the definition of an involutive quantaloid, we can directly derive the following theorem based on Theorem \ref{OMLisdagger} and Lemma \ref{dagjeadj}.


\begin{theorem}
\label{OMLatLinsemiq}  \Cat{SupOMLatLin} is an involutive quantaloid that is a full dagger subcategory of \Cat{SupOLatLin}.
\end{theorem}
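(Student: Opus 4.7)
The plan is to verify the four components of the statement in sequence: that \Cat{SupOMLatLin} is a subcategory of \Cat{SupOLatLin}, that it is full, that the two dagger structures agree, and finally that the inherited hom-lattice enrichment makes \Cat{SupOMLatLin} an involutive quantaloid in its own right. The groundwork has already been done in Lemma~\ref{lem:lattice-adjoint}, Theorem~\ref{OMLisdagger} and Lemma~\ref{dagjeadj}, so the proof is largely a matter of assembling these ingredients.

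First I would note that every complete orthomodular lattice is, in particular, a complete orthocomplemented lattice, so the objects of \Cat{SupOMLatLin} live in \Cat{SupOLatLin}. For morphisms, Lemma~\ref{lem:lattice-adjoint} says that a map between complete orthomodular lattices is linear (in the sense of Definition~\ref{def:linear_map}) if and only if it preserves arbitrary joins. This single equivalence simultaneously yields the inclusion of morphisms into \Cat{SupOLatLin} and the fullness: $\Cat{Lin}(X,Y)$ coincides with the \Cat{SupOLatLin}-hom-set whenever $X$ and $Y$ are complete orthomodular lattices.

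Next, by Theorem~\ref{OMLisdagger} the assignment $f\mapsto f\adj$ is a dagger on \Cat{SupOMLatLin}, and by Lemma~\ref{dagjeadj} this $\adj$ is exactly the restriction of the involution $(-)^{\dagger}$ coming from \Cat{SupOLatLin}. Hence \Cat{SupOMLatLin} is a dagger subcategory, full by the previous step.

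Finally, I would equip each hom-set $\Cat{Lin}(X,Y)$ with the pointwise order inherited from \Cat{SupOLatLin}. A quick calculation using commutativity of joins shows that pointwise joins of join-preserving maps are again join-preserving, so by Lemma~\ref{lem:lattice-adjoint} $\Cat{Lin}(X,Y)$ is a sub--complete-lattice of $\Cat{SupOLatLin}(X,Y)$. The quantaloidal distributivity of composition over joins, and the preservation of joins by the involution, are then inherited directly from \Cat{SupOLatLin}: the former from its sup-lattice enrichment, the latter via the identification $(-)^{\dagger}=(-)\adj$. The only mildly subtle point is closure of hom-lattices under arbitrary joins, but I expect this to be settled in one line from the characterization of linear maps as join-preserving maps, so no part of the argument should present a genuine obstacle.
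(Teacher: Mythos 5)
Your proposal is correct and follows essentially the same route as the paper, which likewise derives the theorem by combining the involutive quantaloid structure of \Cat{SupOLatLin} with Theorem~\ref{OMLisdagger} and Lemma~\ref{dagjeadj}, with Lemma~\ref{lem:lattice-adjoint} supplying the identification of linear maps with join-preserving maps. Your write-up merely makes the fullness and hom-lattice closure steps more explicit than the paper's one-line derivation.
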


The intrinsic structure of sets of linear maps between complete orthomodular lattices is the focus of our current discussion.
We derive the following as a corollary of Theorem \ref{OMLatLinsemiq}.

\begin{corollary}\label{Linisquant}
Let $X$ and $Y$ be complete orthomodular lattices. Then 
     \begin{enumerate}[label={\rm ({\roman*})}]
     \item $\Cat{Lin}(X,Y)$ is a complete lattice,
         \item $\Cat{Lin}(X)$ is a unital involutive quantale.
     \end{enumerate}
\end{corollary}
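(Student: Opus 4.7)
The plan is to harvest the corollary directly from Theorem~\ref{OMLatLinsemiq}, which already packages the category \Cat{SupOMLatLin} as an involutive quantaloid. Both parts are then immediate translations of the quantaloid structure into statements about single hom-sets, so the ``proof'' is really a matter of isolating the endomorphism hom-object and observing that the quantaloid axioms specialize to the axioms of an involutive quantale.

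For part (i), I would simply invoke the definition of a quantaloid: by Theorem~\ref{OMLatLinsemiq}, \Cat{SupOMLatLin} is enriched over \Cat{Sup}, so the hom-object $\Cat{SupOMLatLin}(X,Y)=\Cat{Lin}(X,Y)$ is a complete lattice. There is no further calculation to perform.

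For part (ii), I would instantiate the quantaloid structure at $A=B=X$. Composition restricts to an associative binary operation on $\Cat{Lin}(X)$ with two-sided unit $\idmap[X]$ (this uses Definition~\ref{def:SupOMLatLin} and the fact that the identity is self-adjoint, hence linear). The distributivity law
\begin{align*}
\left(\bigvee_{i\in I} f_i\right)\after \left(\bigvee_{j\in J} g_j\right)
=\bigvee_{i\in I,\,j\in J} f_i\after g_j
\end{align*}
from Definition~\ref{def:quantaloid} specializes, upon setting all objects equal to $X$, to the quantale distributivity axiom. For the involutive structure, I would restrict the dagger to $\Cat{Lin}(X)$: by Lemma~\ref{dagjeadj} this agrees with the adjoint $(-)\adj$, and since $(-)\adj$ is a contravariant involutive endofunctor, it is an anti-automorphism of the monoid $(\Cat{Lin}(X),\after,\idmap[X])$ that preserves arbitrary joins by the involutive quantaloid condition.

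I do not anticipate a genuine obstacle; the only bookkeeping point is to verify the anti-multiplicativity $(g\after f)\adj=f\adj\after g\adj$ of the involution, which is exactly the functoriality of the dagger on the opposite category, already built into Definition~\ref{DagcatDef}. Once this is noted, the axioms of Definition~\ref{qsemiq} are met and $\Cat{Lin}(X)$ is a unital involutive quantale.
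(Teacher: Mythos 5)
Your proposal matches the paper exactly: the corollary is stated there as an immediate consequence of Theorem~\ref{OMLatLinsemiq}, with (i) coming from the \Cat{Sup}-enrichment of the quantaloid and (ii) from specializing the involutive quantaloid structure to the single hom-object $\Cat{Lin}(X)$ with unit $\idmap[X]$ and involution $(-)\adj$. No gaps; this is the intended argument.
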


\section{Foulis quantales and their modules}

In the 1960s, David Foulis introduced a novel mathematical structure known as a ``Baer *-semigroup'', later termed a ``Foulis semigroup''. Properties of the multiplicative semigroup of bounded operators on a Hilbert space served as the basis for Foulis' initial set of characteristics.

This structure provides a framework for modeling various aspects of quantum mechanics. While it has been referenced under different names throughout the literature, its fundamental properties have remained unchanged. For a concise overview, we refer to Chapter 5, Section 18 of Kalmbach's book~\cite{Kalmbach83}. The Foulis quantales we introduce here can be characterized precisely as unital involutive quantales that additionally exhibit the structural properties of Foulis semigroups.

\begin{definition}{\em 
    A {\em Foulis quantale} is a unital involutive quantale $Q$ 
together with an endomap $\sai{-} \colon Q\rightarrow Q$ satisfying:
\begin{enumerate}[label=({\alph*})]
\item $\sai{s}$ is a self-adjoint idempotent, {i.e.},~satisfies
  $\sai{s} \cdot \sai{s} = \sai{s} = \sai{s}^{*}$;

\item $0\, {=} \, \sai{e}$;

\item $s\cdot x = 0$ iff $\ex{y}{x = \sai{s}\cdot y}$.

\end{enumerate}
For an arbitrary $t\in Q$ put $t^{\perp}
  \,\smash{\stackrel{\textrm{def}}{=}}\, \sai{t^{*}} \in \sai{Q}$.
  Hence from~(a) we get equations $t^{\perp} \cdot t^{\perp} =
  t^{\perp} = (t^{\perp})^{*}$. %
  We will call elements of $\sai{Q}$ {\em Sasaki projections}.

  A homomorphism of Foulis quantales is a 
map $h\colon Q_1 \to Q_2$ between Foulis quantales 
that preserves arbitrary joins, finite multiplication, unit, involution, 
and ${}^\perp$. In particular, $h$ maps Sasaki projections to Sasaki projections.}
\end{definition}

\begin{remark}\label{remFouldef}\rm 
    A quantale is a Foulis quantale if and only if it is a unital involutive quantale $Q$ equipped with an endomap ${-}^{\perp} \colon Q\rightarrow Q$ satisfying the following conditions:
\begin{enumerate}[label=({\arabic*})]
\item ${s}^{\perp}$ is a self-adjoint idempotent, i.e., ${s}^{\perp} \cdot {s}^{\perp} = {s}^{\perp} = \big({s}^{\perp}\big)^{*}$;
\item $0 = {e}^{\perp}$;
\item $s\perp x = 0$ if and only if there exists $y$ such that $x = {s}^{\perp}\cdot y$.
\end{enumerate}
\end{remark}

When considering a complete orthomodular lattice, its associated endomorphism quantale exhibits the specific algebraic properties that characterize a Foulis quantale. The fact that $\Cat{Lin}(X)$ forms a Foulis semigroup for any orthomodular lattice $X$ is a classical result from 
\cite{Foul}, later elaborated via Galois connections in \cite[Chapter~5, \S\S18]{Kalmbach83}, dating back six decades.

\begin{proposition}\label{prop:inv}
     Let $X$ be a complete orthomodular lattice. Then 
     $\Cat{Lin}(X)$ is a Foulis quantale.
 \end{proposition}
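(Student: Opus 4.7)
The plan is to leverage what we have already assembled. By Corollary~\ref{Linisquant}(ii), $\Cat{Lin}(X)$ is already a unital involutive quantale, with unit $e=\idmap[X]$, zero element $0_{X,X}$, joins computed pointwise, and involution $f \mapsto f\adj$. So the work reduces to defining an endomap $\sai{-}\colon \Cat{Lin}(X)\to\Cat{Lin}(X)$ and verifying axioms~(a)--(c) of the definition. The canonical candidate, motivated by Lemma~\ref{KernelLemma} and Corollary~\ref{Sasself}, is to send $f$ to the Sasaki projection onto its kernel: set $k_f = f\adj(1)^{\perp}$ and
\[
\sai{f}\;=\;\pi_{k_f},
\]
so that $\image \sai{f} = \downset k_f = \kernel f$.

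For axiom~(a), I would simply invoke Corollary~\ref{Sasself}: every Sasaki projection $\pi_a$ is self-adjoint and idempotent, and in particular $\sai{f}\after\sai{f}=\sai{f}=\sai{f}\adj$. For axiom~(b), note that $e=\idmap[X]$ satisfies $e\adj(1)=1$, hence $k_e = 1^{\perp}=0$; then $\sai{e}=\pi_0$ sends every $y$ to $0\conjun(0^{\perp}\disjun y)=0$, which is the zero morphism. These two steps are mechanical.

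The main content is axiom~(c). Given $f,g\in\Cat{Lin}(X)$, the equation $f\cdot g = 0$ (composition in the quantale) unfolds, by Lemma~\ref{KernelLemma}, to $\image g \subseteq \kernel f = \downset k_f$. By Lemma~\ref{lem:Sasaki projection facts}(a), the Sasaki projection $\pi_{k_f}$ fixes exactly the elements of $\downset k_f$, so this inclusion is equivalent to $\sai{f}\after g = g$, i.e.\ $g = \sai{f}\cdot g$. This provides the witness $y=g$ for the existential, handling the forward implication. For the converse, suppose $g = \sai{f}\cdot h$ for some $h$. It suffices to show $f\cdot\sai{f}=0$: for any $z\in X$, $\sai{f}(z)=\pi_{k_f}(z)\in\downset k_f=\kernel f$, whence $f(\sai{f}(z))=0$. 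Therefore $f\cdot g = f\cdot\sai{f}\cdot h = 0$.

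I do not anticipate any real obstacle: Corollary~\ref{Sasself} handles (a), axiom~(b) is a one-line computation, and the only place where careful bookkeeping is needed is (c), where the translation between ``composition is zero'' and ``image lies in the kernel'' must be lined up with the fix-point characterization of $\pi_{k_f}$ from Lemma~\ref{lem:Sasaki projection facts}(a). Because Lemma~\ref{KernelLemma} already tells us that $\kernel f$ is precisely the principal downset $\downset f\adj(1)^{\perp}$, this alignment is immediate, and the proof collapses to a short verification.
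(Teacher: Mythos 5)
Your proposal is correct, and it coincides with the paper's proof in all structural choices: the same reduction to Corollary~\ref{Linisquant}, the same definition $\sai{f}=\pi_{f\adj(1)^{\perp}}$, and the same treatment of axioms (a) and (b). Where you genuinely diverge is in axiom (c). The paper argues categorically via Theorem~\ref{OMLatLinDagKerCatThm}: it writes $\pi_{k_f}=\kernel(f)\circ\kernel(f)^{*}$, derives the forward direction from $f\circ\kernel(f)=0$, and for the converse invokes the universal property of the dagger kernel to produce a factorization $g=\kernel(f)\circ h$, then uses $\kernel(f)^{*}\circ\kernel(f)=\idmap$ to conclude $\sai{f}\circ g=g$. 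You instead argue elementwise: $f\cdot g=0$ iff $\image g\subseteq\kernel f=\downset k_f$ (the identification of the kernel with a principal downset being Lemma~\ref{KernelLemma}), and then the fixed-point characterization of Sasaki projections, Lemma~\ref{lem:Sasaki projection facts}(a), converts this inclusion into $\sai{f}\circ g=g$, with the witness $y=g$; the converse follows because $\image\sai{f}=\downset k_f=\kernel f$. Both arguments are sound and rest on the same underlying fact (Lemma~\ref{KernelLemma}); yours is more elementary and self-contained, while the paper's version exhibits axiom (c) as a direct consequence of the dagger-kernel structure it has already built, which is the conceptual point the authors want to stress. One cosmetic remark: the unfolding of $f\cdot g=0$ to $\image g\subseteq\kernel f$ is just the set-theoretic definition of the kernel and does not itself require Lemma~\ref{KernelLemma}; that lemma is only needed for $\kernel f=\downset k_f$.
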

 \begin{proof} We know from Corollary \ref{Linisquant} that 
 $\Cat{Lin}(X)$ is a unital involutive quantale.
Let us define the endomap 
$\sai{-} \colon \Cat{Lin}(X)\rightarrow \Cat{Lin}(X)$ by 
 $\sai{s} =\pi_{s^*(1)^{\perp}}$ for all $s\in \Cat{Lin}(X)$. 
 Evidently,  $\sai{s}$  is a self-adjoint idempotent. 
 We compute:
 \begin{align*}
\sai{\idmap[{X}]}=\pi_{\idmap[X]^*(1)^{\perp}}=%
\pi_{1^{\perp}}=\pi_{0}=0_{X,X}.
 \end{align*}
\noindent Suppose that $s, t\in \Cat{Lin}(X)$. Let $t=\sai{s}\after r$. From Theorem \ref{OMLatLinDagKerCatThm} 
we conclude
\begin{align*}s \after t
&=s \after \pi_{s^*(1)^{\perp}} \after r=
s \after \ker(s) \after \ker(s)^{*} \after r\\
&=
0_{\downset {s^{*}(1)}^{\perp}, X} \after \ker(s)^{*} \after r 
=0_{X,X}=0_{\Cat{Lin}(X)}.
\end{align*}

\noindent Conversely, if $s\after t = 0_{\Cat{Lin}(X)}=0_{X,X}$, then there is a linear map
$f\colon X\to {\downset {s^{*}(1)}^{\perp}} $ such 
that $\ker(s) \after f = t$ and 
$0_{\downset s^*(1)^{\perp},X}\after f=0_{X,X}$. Hence $t$ satisfies:
$$\sai{s} \after t
= \pi_{s^*(1)^{\perp}} \after t=
\ker(s) \after \ker(s)^{*} \after \ker(s) \after f
=
\ker(s) \after f
=
t.$$

\[
\begin{tikzcd}[row sep=1cm, column sep=.75cm, ampersand replacement=\&]
	X\arrow[rrrr, "s"] \& \& \& \&X\\
	\&\& \downset s^*(1)^{\perp}\arrow[ull, "\kernel(s)"]\arrow[urr, "0_{\downset s^*(1)^{\perp},X}"]\&\&\\
	\&\& X\arrow[uull, bend left, "t"]\arrow[u,dashed,"\exists!f"]\arrow[uurr, bend right, "0_{X,X}"']
\end{tikzcd}
\]
\end{proof}

\begin{theorem}
\label{FoulisOMKerLem}
Let $Q$ be a Foulis quantale. Then, 
for all $t, r\in Q$ and $k\in \sai{Q}$, 
$$\begin{array}{rcl}
r^{*}\cdot t=0
& \Longleftrightarrow &
t=[r^{*}]\cdot t,
\end{array}\eqno{(*)\phantom{**}}$$
$$\begin{array}{rclcl}
t\leq  r
& \Longrightarrow &
r^{\perp}\leq t^{\perp} &\text{and}&
k^{\perp\perp}=k,
\end{array}\eqno{(**)\phantom{*}}$$
$$\begin{array}{rcl}
t\leq  r^{\perp}
& \Longleftrightarrow &
r\leq t^{\perp}. 
\end{array}\eqno{(***)}$$

\noindent{}and the subset
$$\begin{array}{rcccl}
\sai{Q}
& = &
\set{\sai{t}}{t\in Q} 
& \subseteq & 
Q,
\end{array}$$

\noindent is a complete orthomodular lattice with the following structure.
$$\begin{array}{lrcl}
\mbox{Order} & k_{1}\leq k_{2} & \Leftrightarrow & k_{1} = k_{2}\cdot k_{1} \\
\mbox{Top} & 1 & = &  
   \sai{0} \\
\mbox{Orthocomplement\quad\quad} & k^{\perp} & = & \sai{k} \\
\mbox{Meet} & k_{1} \conjun k_{2} & = &  
   \big(k_{1} \cdot \sai{\sai{k_{2}}\cdot k_{1}}\big)^{\perp\perp}\\
\mbox{Join} & \bigvee S & = &  \sai{\sai{\bigsqcup S}}.
\end{array}$$
\end{theorem}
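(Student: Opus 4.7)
The plan is to verify (*), the two halves of (**) in turn, then (***), and finally the orthomodular lattice structure on $\sai{Q}$, leaving the meet formula and the orthomodular identity for last.

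For (*), specialize axiom (c) to $s = r^{*}$, $x = t$: this makes $r^{*} \cdot t = 0$ equivalent to the existence of $y$ with $t = \sai{r^{*}} \cdot y$. Left-multiplying by $\sai{r^{*}}$ and using idempotence (axiom (a)) collapses such an equation into $t = \sai{r^{*}} \cdot t$; the converse is immediate with $y = t$. As a byproduct, setting $x = \sai{s} = \sai{s} \cdot e$ in axiom (c) yields $s \cdot \sai{s} = 0$ for all $s \in Q$, and taking involutions gives $\sai{s} \cdot s^{*} = 0$.

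For the antitonicity half of (**), $t \leq r$ gives $t^{*} = t^{*} \cdot r^{*}$, hence $t^{*} \cdot \sai{r^{*}} = t^{*} \cdot r^{*} \cdot \sai{r^{*}} = 0$, and (*) produces $\sai{r^{*}} = \sai{t^{*}} \cdot \sai{r^{*}}$, i.e., $r^{\perp} \leq t^{\perp}$. I also record the easy antisymmetry of $\leq$ on $\sai{Q}$: $k_{1} = k_{2} \cdot k_{1}$ and $k_{2} = k_{1} \cdot k_{2}$ combined with self-adjointness give $k_{1} = k_{1}^{*} = (k_{2} \cdot k_{1})^{*} = k_{1} \cdot k_{2} = k_{2}$. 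For the involution law, the byproduct at $s = t^{*}$ is $\sai{t^{*}} \cdot t = 0$, so (*) applied with $r = \sai{t^{*}}$ yields $t = \sai{\sai{t^{*}}} \cdot t = t^{\perp\perp} \cdot t$, i.e., $t \leq t^{\perp\perp}$ for every $t \in Q$. Antitonicity applied to $t \leq t^{\perp\perp}$ forces $t^{\perp\perp\perp} \leq t^{\perp}$, and the instance of $u \leq u^{\perp\perp}$ at $u = t^{\perp}$ gives the reverse inequality; antisymmetry (both elements lie in $\sai{Q}$) then forces $t^{\perp} = t^{\perp\perp\perp}$. Any $k \in \sai{Q}$ equals $(s^{*})^{\perp}$ for some $s$, so $k^{\perp\perp} = (s^{*})^{\perp\perp\perp} = (s^{*})^{\perp} = k$. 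Statement (***) is immediate from (*) and the involution-symmetry $r^{*} \cdot t = 0 \Leftrightarrow t^{*} \cdot r = 0$.

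For the lattice structure on $\sai{Q}$, the relation $\leq$ is a partial order (reflexivity from idempotence, transitivity from composition, antisymmetry already noted). The top $\sai{0}$ equals $e$, because axiom (c) applied to the identity $0 \cdot e = 0$ yields $y_{0}$ with $e = \sai{0} \cdot y_{0}$, and then $\sai{0} = \sai{0} \cdot e = \sai{0} \cdot \sai{0} \cdot y_{0} = e$. For the join formula, self-adjointness of each $k \in S$ gives $(\bigsqcup S)^{*} = \bigsqcup S$, so the byproduct $\sai{s} \cdot s^{*} = 0$ at $s = \bigsqcup S$ becomes $\sai{\bigsqcup S} \cdot \bigsqcup S = 0$; distributivity then forces $\sai{\bigsqcup S} \cdot k = 0$ for each $k \in S$, whence $k \leq \sai{\sai{\bigsqcup S}}$ by (*). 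Conversely, any upper bound $m \in \sai{Q}$ satisfies $\sai{m} \cdot k = \sai{m} \cdot m \cdot k = 0$ (using $\sai{m} \cdot m = 0$) for each $k \in S$, so $\sai{m} \cdot \bigsqcup S = 0$ by distributivity; taking involutions gives $(\bigsqcup S) \cdot \sai{m} = 0$, (*) produces $\sai{m} \leq \sai{\bigsqcup S}$, and antitonicity with $m^{\perp\perp} = m$ yields $\sai{\sai{\bigsqcup S}} \leq m$.

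The main obstacle is the meet formula $k_{1} \conjun k_{2} = (k_{1} \cdot \sai{\sai{k_{2}} \cdot k_{1}})^{\perp\perp}$ together with the orthomodular identity. The special case $k \conjun k^{\perp} = 0$ is easy: with $k_{2} = k^{\perp}$, the involution law gives $\sai{k_{2}} = \sai{\sai{k}} = k$, so the inner term becomes $k \cdot k = k$, and the whole expression reduces to $(k \cdot \sai{k})^{\perp\perp} = 0^{\perp\perp} = 0$ via axiom (b). For the general meet, my plan is to identify $(k_{1} \cdot \sai{\sai{k_{2}} \cdot k_{1}})^{\perp\perp}$ with the De Morgan dual $(\sai{k_{1}} \disjun \sai{k_{2}})^{\perp}$ of the join by repeated rearrangement of nested applications of $\sai{-}$ through the byproducts $s \cdot \sai{s} = 0$, $\sai{s} \cdot s^{*} = 0$, and (*), and then to deduce the orthomodular identity $k_{1} \leq k_{2} \Rightarrow k_{2} = k_{1} \disjun (k_{1}^{\perp} \conjun k_{2})$ from the same Sasaki-projection calculus that underlies Proposition \ref{prop:inv} and Corollary \ref{Sasself} in the concrete setting $\Cat{Lin}(X)$.
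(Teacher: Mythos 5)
Your argument for $(*)$, $(**)$, $(***)$, the partial order, the top element $\sai{0}=e$, the triple-perp identity, and the arbitrary-join formula $\bigvee S=\sai{\sai{\bigsqcup S}}$ is correct and complete; in fact your direct verification of the join formula is the one piece the paper singles out as new (it cites \cite[Theorem 21]{BLP} for it), while everything else the paper disposes of by citing \cite[Lemma 4.6]{Jac} for Foulis semigroups. However, there is a genuine gap: the meet formula $k_{1}\conjun k_{2}=\big(k_{1}\cdot\sai{\sai{k_{2}}\cdot k_{1}}\big)^{\perp\perp}$ and, above all, \emph{orthomodularity} of $\sai{Q}$ are never proved. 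You explicitly defer both to a ``plan'' of ``repeated rearrangement of nested applications of $\sai{-}$,'' but this is precisely the technical heart of Foulis's theorem and cannot be waved through: from the complete lattice structure and the involution one gets \emph{some} meet by De Morgan, namely $(k_{1}^{\perp}\disjun k_{2}^{\perp})^{\perp}$, so the content of the meet formula is a nontrivial identification with a product expression, and the orthomodular law $k_{1}\leq k_{2}\Rightarrow k_{2}=k_{1}\disjun(k_{1}^{\perp}\conjun k_{2})$ is exactly what distinguishes $\sai{Q}$ from a mere complete ortholattice.

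Two further points. First, your ``special case $k\conjun k^{\perp}=0$'' only shows that the \emph{proposed formula} evaluates to $0$ at the pair $(k,k^{\perp})$; it establishes $k\conjun k^{\perp}=0$ only once the formula is known to compute the meet, which is the very thing left unproved. (The fact itself is easy to get directly: any lower bound $x$ of $k$ and $k^{\perp}$ satisfies $x=k\cdot x$ and $x=\sai{k}\cdot x$, and $(*)$ turns the latter into $k\cdot x=0$, whence $x=0$; you should argue this way.) Second, the proposed route of deducing orthomodularity ``from the Sasaki-projection calculus underlying Proposition~\ref{prop:inv} and Corollary~\ref{Sasself} in the concrete setting $\Cat{Lin}(X)$'' is backwards: those results presuppose that $X$ is already a complete \emph{orthomodular} lattice, whereas here orthomodularity of $\sai{Q}$ for an abstract Foulis quantale is the conclusion to be reached. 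The missing computations are exactly those of \cite[Lemma 4.6]{Jac} (going back to Foulis \cite{Foul}), and they need to be either reproduced or cited.
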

\begin{proof} 
A significant portion of this statement, including its proof for Foulis semigroups, aligns with the established findings presented in \cite[Lemma 4.6.]{Jac}. To complete the demonstration, it is necessary  only to establish that
\begin{equation*}
\sai{\sai{\bigsqcup S}} = \bigvee S
\end{equation*}
where $S$ is a subset of $\sai{Q}$. As this portion of the proof is identical to the argument presented in \cite[Theorem 21]{BLP} for a finite subset, we refer the reader there for the complete details.
\end{proof}

Quantale modules formalize the interaction between quantum operations and quantum states. Specifically, elements of a quantale ${Q}$ represent quantum operations, while points in a module ${A}$ correspond to the possible states of a quantum system. The action of the quantale on its module captures how quantum operations transform the system state: each operation $q \in {Q}$ induces a state transition $q \cdot m \mapsto m'$, where $m, m' \in {A}$.

\begin{definition}\label{keymodule}{\em  Given a quantale $Q$, 
a {\em  left $Q$-module} 
	is a complete lattice $A$  and a map $\bullet\colon Q\times A\longrightarrow A$ satisfying:
	\begin{itemize}
		\item[](A1) $s \bullet (\bigvee B)=\bigvee_{x\in B}(s \bullet x)$ 
		for every  $B\subseteq A$ and  $s \in Q$.
		\item[](A2) $(\bigsqcup T)\bullet a=\bigvee_{t\in T}(t\bullet a)$ 
		for every $T\subseteq Q$ and  $a \in A$.
		\item[](A3) $u\bullet(v\bullet a)=(u\cdot v)\bullet a$ for every $u,v \in Q$ and every $a\in A$.
		\item[](A4) $e \bullet a=a$ for all $a\in A$ (unitality).\\
	\end{itemize}	}
\end{definition}

The definition of right $Q$-modules follows analogously. It is readily apparent that every complete lattice $A$ forms a right ${\mathbf 2}$-module. Here, ${\mathbf 2}$ represents a two-element chain equipped with a meet operation as its multiplication and the identity map as its involution.

The following statement says that  a complete orthomodular lattice $X$  can be acted upon from the left by its linear transformations and from the right by a specific two-element structure, giving it two different but compatible ways of being transformed or modified.

\begin{proposition}\label{rqinv}
	Let $X$ be a complete orthomodular lattice. Then 
	$X$ is a left $\Cat{Lin}(X)$-module and also a right $\Cat{2}$-module.
\end{proposition}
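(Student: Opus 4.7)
The plan is to exhibit the two actions explicitly and then verify, in turn, the four module axioms for each. Essentially all the work has already been done by earlier results in the paper: being a left $\Cat{Lin}(X)$-module is little more than a repackaging of the fact that $\Cat{Lin}(X)$ is a unital involutive quantale of join-preserving endomaps of $X$, while the right $\Cat{2}$-module structure is trivial.

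For the left action, I would take $\bullet \colon \Cat{Lin}(X) \times X \to X$ to be evaluation, $f \bullet x \mathrel{\mathop:}= f(x)$. For (A1) I would use Lemma \ref{lem:lattice-adjoint}, which says that every linear map preserves arbitrary joins, giving $f(\bigvee B) = \bigvee_{x \in B} f(x)$. For (A2) I would use that by Corollary \ref{Linisquant} (and Theorem \ref{OMLatLinsemiq}) the complete lattice structure on $\Cat{Lin}(X)$ is inherited pointwise from $\Cat{SupOLatLin}$, so $(\bigsqcup T)(a) = \bigvee_{t \in T} t(a)$ for every $T \subseteq \Cat{Lin}(X)$. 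Axiom (A3) is merely the definition of composition in $\Cat{SupOMLatLin}$: $(u \cdot v) \bullet a = (u \after v)(a) = u(v(a)) = u \bullet (v \bullet a)$. Axiom (A4) holds because the unit of $\Cat{Lin}(X)$ is $\idmap[X]$, hence $e \bullet a = \idmap[X](a) = a$.

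For the right $\Cat{2}$-module structure, I would define $\bullet \colon X \times \Cat{2} \to X$ by $a \bullet 1 = a$ and $a \bullet 0 = 0$, with $\Cat{2} = \{0, 1\}$ having $\wedge$ as multiplication and unit $1$. The four axioms then reduce to a short case analysis on the values in $\Cat{2}$: the right-module analogue of (A1) holds because $\bigvee B \bullet 1 = \bigvee B = \bigvee_{x \in B} (x \bullet 1)$ and $\bigvee B \bullet 0 = 0 = \bigvee_{x \in B} (x \bullet 0)$; the analogue of (A2) follows by splitting on whether $1 \in T$ or not (both sides equal $a$ in the former case, $0$ in the latter, including $T = \emptyset$); the associativity (A3) amounts to $a \bullet (u \wedge v) = (a \bullet u) \bullet v$, checked on the four combinations of $u, v \in \{0,1\}$; and (A4) is $a \bullet 1 = a$ by definition.

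There is no real obstacle: the only point that deserves explicit mention is the pointwise description of joins in $\Cat{Lin}(X)$, which is what couples axiom (A2) of the left module to the quantale structure established in Corollary \ref{Linisquant}. Everything else is a direct unravelling of definitions. I would therefore write the proof as two short displays, one per action, with the relevant citations inserted at (A1) and (A2) for the left case and a compact case-analysis table (or prose equivalent) for the right case.
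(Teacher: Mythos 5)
Your proposal is correct and follows essentially the same route as the paper: the paper also defines the left action as evaluation, $f\bullet x = f(x)$, and declares the verification of (A1)--(A4) transparent (relying, as you do, on Lemma~\ref{lem:lattice-adjoint} for join-preservation and on the pointwise joins in $\Cat{Lin}(X)$ from Corollary~\ref{Linisquant}), while the right $\Cat{2}$-module structure is the standard one on any complete lattice noted just before the proposition. You simply spell out the details that the paper leaves implicit.
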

\begin{proof} We define the action 
$\bullet\colon \Cat{Lin}(X)\times X\longrightarrow X$ by 
$f\bullet x=f(x)$ for all $f\in \Cat{Lin}(X)$ and all $x\in X$. 
The verification of conditions (A1)-(A4) is transparent.
\end{proof}

The following theorem demonstrates that the complete orthomodular lattice of Sasaki projections in a Foulis quantale carries both a left module structure over the quantale itself and a right module structure over the two-element Boolean algebra. This bimodule structure illuminates the algebraic nature of Sasaki projections.

\begin{theorem} Let $Q$ be a Foulis quantale. Then $\sai{Q}$ is a left $Q$-module   with action $\bullet$ defined as 
$u\bullet k=(u\cdot k)^{\perp\perp}$ for all $u\in Q$ and $k\in \sai{Q}$  and also a right $\Cat{2}$-module.
\end{theorem}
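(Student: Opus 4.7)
The plan is to verify the left $Q$-module axioms (A1)--(A4) for the action $\bullet$, and to note that the right $\Cat{2}$-module structure is the canonical one carried by every complete lattice ($k \cdot 1 = k$ and $k \cdot 0$ the bottom of $\sai{Q}$), as already remarked after Definition~\ref{keymodule}. Axiom (A4) is immediate: $e \bullet k = (e \cdot k)^{\perp\perp} = k^{\perp\perp} = k$, using $k^{\perp\perp} = k$ from Theorem~\ref{FoulisOMKerLem}.

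Axioms (A1)--(A3) reduce to a single \emph{nucleus-type identity} $(s \cdot t^{\perp\perp})^{\perp\perp} = (s \cdot t)^{\perp\perp}$ for all $s, t \in Q$, together with its join variant $(\bigsqcup_{k \in B} s \cdot k)^{\perp\perp} = (\bigsqcup_{k \in B}(s \cdot k)^{\perp\perp})^{\perp\perp}$ for $B \subseteq \sai{Q}$. My strategy for both is via \emph{right annihilators}. Identity $(*)$ of Theorem~\ref{FoulisOMKerLem} gives $t^{*} \cdot y = 0$ iff $y = t^{\perp} \cdot y$; applying $(*)$ again with $r = t^{\perp\perp}$ (self-adjoint), together with $t^{\perp\perp\perp} = t^{\perp}$---the instance of $k^{\perp\perp} = k$ in $(**)$ with $k = t^{\perp}$---gives $t^{\perp\perp} \cdot y = 0$ iff $y = t^{\perp} \cdot y$ too. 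Hence $t^{*}$ and $t^{\perp\perp}$ have the same right annihilator, and this persists after right-multiplication by any $s^{*}$ and under arbitrary joins. A short auxiliary lemma then shows that a Sasaki projection is uniquely determined by its right annihilator: if $\{a : s_1 \cdot a = 0\} = \{a : s_2 \cdot a = 0\}$, then axiom~(c) of the Foulis quantale definition expresses $\sai{s_1}$ as $\sai{s_2} \cdot y$ and $\sai{s_2}$ as $\sai{s_1} \cdot z$; idempotence then yields $\sai{s_2}\sai{s_1} = \sai{s_1}$ and $\sai{s_1}\sai{s_2} = \sai{s_2}$, and applying the involution to the first gives $\sai{s_1}\sai{s_2} = \sai{s_1}$, forcing $\sai{s_1} = \sai{s_2}$. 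Applying this uniqueness to the pair $(t^{*} s^{*}, t^{\perp\perp} s^{*})$ yields $(s \cdot t)^{\perp} = (s \cdot t^{\perp\perp})^{\perp}$, and a further $(-)^{\perp}$ gives the nucleus identity; the corresponding application to joins yields its join variant.

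With these identities in hand, (A3) is the chain $u \bullet (v \bullet k) = (u \cdot (v \cdot k)^{\perp\perp})^{\perp\perp} = (u \cdot v \cdot k)^{\perp\perp} = (u \cdot v) \bullet k$, applying the nucleus identity with $t := v \cdot k$. For (A1), I would expand $\bigvee B = (\bigsqcup B)^{\perp\perp}$ via Theorem~\ref{FoulisOMKerLem}, apply the nucleus identity together with quantale distributivity to rewrite $s \bullet (\bigvee B)$ as $(\bigsqcup_{k \in B} s \cdot k)^{\perp\perp}$, and then invoke the join variant to equate it with $(\bigsqcup_{k \in B}(s \cdot k)^{\perp\perp})^{\perp\perp} = \bigvee_{k \in B}(s \bullet k)$. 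Axiom (A2) is the same style of computation and uses only quantale distributivity and the join formula.

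The main obstacle is the nucleus identity. The tempting strategy of treating $(-)^{\perp\perp}$ as a closure operator on $(Q, \sqsubseteq)$ is doomed, because $(-)^{\perp\perp}$ is not extensive in the lattice order: one can exhibit linear maps $f$ on the subspace lattice of $\mathbb{R}^3$ with $f \not\sqsubseteq f^{\perp\perp}$ pointwise. The annihilator route above avoids this difficulty by using only $(*)$ and the rigid determination of Sasaki projections by their annihilator right ideals.
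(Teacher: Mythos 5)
Your argument is correct, and it is genuinely more self-contained than the paper's proof, which simply defers to \cite[Theorem~24]{BLP} with the remark that finite subsets are to be replaced by arbitrary ones. The load-bearing step in your version is the observation that $t^{*}$ and $t^{\perp\perp}$ have the same right annihilator --- via $(*)$ both annihilators equal $\{y : y = t^{\perp}\cdot y\}$, using $t^{\perp\perp\perp}=t^{\perp}$ and self-adjointness of $t^{\perp\perp}$ --- combined with your uniqueness lemma that $\sai{s}$ is determined by $\{a : s\cdot a = 0\}$ (which follows cleanly from axiom~(c), idempotence, self-adjointness, and one application of the involution, exactly as you sketch). This yields $(s\cdot t)^{\perp}=(s\cdot t^{\perp\perp})^{\perp}$ and, since the annihilator of a join is the intersection of the annihilators, the fully general identity $\bigl(\bigsqcup_i x_i\bigr)^{\perp}=\bigl(\bigsqcup_i x_i^{\perp\perp}\bigr)^{\perp}$; together with $\bigvee S = (\bigsqcup S)^{\perp\perp}$ from Theorem~\ref{FoulisOMKerLem} these dispatch (A1)--(A3), and (A4) and the right $\Cat{2}$-module structure are immediate. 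Two small points of presentation: for (A2) you do need the join variant of the nucleus identity applied to the family $\{t\cdot k\}_{t\in T}$, not merely distributivity and the join formula, so it is worth stating the annihilator-of-a-join identity once in the general form above and citing it in both (A1) and (A2); and your closing remark that $(-)^{\perp\perp}$ is not extensive for $\sqsubseteq$ is a useful warning but not needed for the proof. What your route buys is a proof internal to the axioms of a Foulis quantale, with the infinite case handled on the same footing as the finite one; what the paper's citation buys is brevity at the cost of sending the reader to an external argument for Foulis m-semilattices.
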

\begin{proof} The proof proceeds analogously to \cite[Theorem 24]{BLP}, with the sole modification that arbitrary subsets are used in place of finite subsets throughout the argument.
\end{proof}

\begin{definition}\label{Sasaki action}
    Let $Q$ be a Foulis quantale and $u\in Q$. 
    Then the map $\sigma_u:\sai{Q}\to \sai{Q}$, $y\mapsto u\bullet  y$ is called the \emph{Sasaki action} to $u\in Q$.
\end{definition}

Evidently, $\sigma_u\in \Cat{Lin}(\sai{Q})$. Moreover, if 
$u\in \sai{Q}$ then $\sigma_u$ is  self-adjoint linear, idempotent 
    and $\image \sigma_u=\downset u$ in $\sai{Q}$ (see \cite[Proposition 26]{BLP}).

The following theorem establishes a canonical correspondence between elements of a Foulis quantale and linear transformations acting on its Sasaki projections, illuminating the structural relationship between these components.

\begin{theorem} Let $Q$ be a Foulis quantale. Then there is a natural 
homorphism $h\colon Q\to \Cat{Lin}(\sai{Q})$ of Foulis quantales 
such that $h(u)=\sigma_u$ for all $u\in Q$.
\end{theorem}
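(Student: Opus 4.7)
The plan is to verify that $h(u):=\sigma_u$ respects each piece of Foulis-quantale structure in turn. The map is at least well-defined: $\sigma_u\in \Cat{Lin}(\sai{Q})$ by the remark after Definition~\ref{Sasaki action}. Preservation of arbitrary joins, the unit, and multiplication is immediate from the left $Q$-module structure on $\sai{Q}$ established in the previous theorem: axioms (A2), (A4), and (A3) applied pointwise give respectively $\sigma_{\bigsqcup_i u_i}=\bigvee_i \sigma_{u_i}$, $\sigma_e=\idmap[\sai{Q}]$, and $\sigma_{u\cdot v}=\sigma_u\after\sigma_v$, so the three \emph{easy} clauses are essentially free.

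For the involution clause I would show that $\sigma_{u^*}$ is an adjoint of $\sigma_u$, i.e.\ $\sigma_u(k)\perp l \Leftrightarrow k\perp \sigma_{u^*}(l)$ for $k,l\in\sai{Q}$. The key auxiliary step is to translate OML-orthogonality in $\sai{Q}$ back into the quantale: property $(*)$ of Theorem~\ref{FoulisOMKerLem} together with the self-adjointness of Sasaki projections yields $p\perp q \Leftrightarrow q\cdot p=0$ for $p,q\in\sai{Q}$, and a short consequence of $(**)$ and $(***)$ is $s\leq p \Leftrightarrow s^{\perp\perp}\leq p$ whenever $s\in Q$ and $p\in\sai{Q}$. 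Combining these, $\sigma_u(k)\perp l=(u\cdot k)^{\perp\perp}\perp l$ unfolds to $l\cdot u\cdot k=0$; applying the involution of $Q$ (and using $k^*=k$, $l^*=l$) rewrites this as $k\cdot u^*\cdot l=0$, which by the same translation is exactly $k\perp \sigma_{u^*}(l)$.

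The remaining and most delicate identity is $\sai{\sigma_u}=\sigma_{\sai{u}}$. Since $\sai{u}\in\sai{Q}$, the remark after Definition~\ref{Sasaki action} makes $\sigma_{\sai{u}}$ a self-adjoint idempotent on $\sai{Q}$ with image $\downset\sai{u}$, so it coincides with the Sasaki projection $\pi_{\sai{u}}$. Proposition~\ref{prop:inv} applied to the complete orthomodular lattice $\sai{Q}$ gives $\sai{\sigma_u}=\pi_{\sigma_u^{*}(1)^{\perp}}$, so I only need to verify $\sigma_u^{*}(1)^{\perp}=\sai{u}$. Using $\sigma_u^{*}=\sigma_{u^*}$ from the previous paragraph, and the auxiliary fact that $\sai{0}$ is a two-sided unit for $Q$---from property~(3) with $s=0$, every $x$ can be written $x=\sai{0}\cdot y$, whence $\sai{0}\cdot x=x$; the involution then yields $x\cdot\sai{0}=x$, so $\sai{0}=e$ and $\sai{0}$ is the top $1$ of $\sai{Q}$---one gets $\sigma_u^{*}(1)=(u^*\cdot\sai{0})^{\perp\perp}=(u^*)^{\perp\perp}$. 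A final appeal to $(**)/(***)$ gives $(u^*)^{\perp\perp\perp}=(u^*)^{\perp}=\sai{u}$.

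The step I expect to be the genuine obstacle is the preservation of $\sai{-}$: it requires threading several auxiliary interactions between the quantale order, multiplication, and the $\sai{-}$ operation at once, most notably the identifications $\sai{0}=e$ and $s^{\perp\perp\perp}=s^{\perp}$. Once these are in hand, each clause of the homomorphism property drops out by a short direct calculation based on the module structure of $\sai{Q}$ over $Q$.
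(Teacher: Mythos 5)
Your proposal is correct and follows essentially the same route as the paper: the module axioms dispose of joins, unit, and multiplication; adjointness of $\sigma_{u^*}$ is verified by translating orthogonality in $\sai{Q}$ into the identity $l\cdot u\cdot k=0$ and applying the involution; and the $\perp$/$\sai{-}$ clause is settled by identifying both sides with the classical Sasaki projection $\pi_{u^{\perp}}$ on $\sai{Q}$ via Proposition~\ref{prop:inv}. Your explicit derivations of $\sai{0}=e$ and of $s\leq p\Leftrightarrow s^{\perp\perp}\leq p$ merely make precise steps the paper leaves implicit.
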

\begin{proof} Since $\sai{S}$ is a left $Q$-module, we obtain that $h$ preserves multiplication, arbitrary joins, and the unit. It remains to show that $h$ preserves involution and ${}^{\perp}$.

Assume that $u\in Q$ and $k,l\in \sai{Q}$. We compute:
\begin{align*}
    &h(u)\adj (k)\leq l^{\perp}  \text{ if and only if } 
    h(u)\adj (k) \perp l  \text{ if and only if } 
    k\perp h(u)(l)\\
    &\text{if and only if } h(u)(l)\leq k^{\perp} 
    \text{ if and only if } u\bullet l \leq k^{\perp} 
    \text{ if and only if } \\
    &k^{\perp}  \cdot u\cdot l =  u\cdot l \text{ if and only if } 
    k \cdot u\cdot l =0 \text{ if and only if } 
    l\adj \cdot u\adj \cdot k\adj =0 \\
    &\text{if and only if }  l\cdot u\adj \cdot k =0 
    \text{ if and only if } l^{\perp}  \cdot u\adj \cdot k =  u\adj\cdot k\\
    &\text{if and only if } u\adj \bullet k\leq l^{\perp} 
     \text{ if and only if } h(u\adj)(k)\leq l^{\perp}.
\end{align*}
Since 
$u^{\perp}\in \sai{Q}$ we  have that 
$h(u^{\perp})=\sigma_{u^{\perp}}$ is the classical Sasaki projection 
$\pi_{u^{\perp}}$ by \cite[Proposition 3.6]{BPS}. From 
Proposition~\ref{prop:inv} we obtain 
\begin{align*}
    h(u)^{\perp}=\pi_{h(u)(e)^{\perp}}=%
    \pi_{(u^{\perp\perp})^{\perp}}=\pi_{u^{\perp}}.
\end{align*}
We conclude that $h(u)\adj =h(u\adj)$ and $h(u)^{\perp} =h(u^{\perp})$ 
for all $u\in Q$. 

\end{proof}

\begin{credits}
\subsubsection{\ackname} The first author acknowledges support from the Czech Science Foundation (GAČR) project 23-09731L ``Representations of algebraic semantics for substructural logics''. The second author was supported by the Austrian Science Fund (FWF) [10.55776/PIN5424624] and the Czech Science Foundation (GAČR) project 25-20013L ``Orthogonality and Symmetry''. The third author acknowledges support from the Masaryk University project MUNI/A/1457/2023.

\end{credits}
%
%
%
%

\end{document}